\newtheorem{theorem}{Theorem}
\newtheorem{lemma}[theorem]{Lemma}
\theoremstyle{definition}
\theoremstyle{remark}
\numberwithin{equation}{section}
\newcommand{\intav}[1]{\mathchoice {\mathop{\vrule width 6pt height 3 pt depth  -2.5pt
\kern -8pt \intop}\nolimits_{\kern -6pt#1}} {\mathop{\vrule width
5pt height 3  pt depth -2.6pt \kern -6pt \intop}\nolimits_{#1}}
{\mathop{\vrule width 5pt height 3 pt depth -2.6pt \kern -6pt
\intop}\nolimits_{#1}} {\mathop{\vrule width 5pt height 3 pt depth
-2.6pt \kern -6pt \intop}\nolimits_{#1}}}
\newcommand{\intavl}[1]{\mathchoice {\mathop{\vrule width 6pt height 3 pt depth  -2.5pt
\kern -8pt \intop}\limits_{\kern -6pt#1}} {\mathop{\vrule width 5pt
height 3  pt depth -2.6pt \kern -6pt \intop}\nolimits_{#1}}
{\mathop{\vrule width 5pt height 3 pt depth -2.6pt \kern -6pt
\intop}\nolimits_{#1}} {\mathop{\vrule width 5pt height 3 pt depth
-2.6pt \kern -6pt \intop}\nolimits_{#1}}}
 \newcommand{\mc}{\mathcal}
 \newcommand{\R}{\mathbb{R}}
 \newcommand{\Z}{\mathbb{Z}}
 \newcommand{\hh}{\tfrac12}
 \newcommand{\dy}{\text{\rm d}y}
\begin{document}

\title[Discrete maximal operators]{On the endpoint regularity of discrete \\ maximal operators}

\author[Carneiro]{Emanuel Carneiro}
\address{IMPA - Instituto de Matem\'{a}tica Pura e Aplicada, Estrada Dona Castorina, 110, Rio de Janeiro, Brazil 22460-320.}
\email{carneiro@impa.br}

\author[Hughes]{Kevin Hughes}
\address{Department of Mathematics, Princeton University, Fine Hall, Washington Road, Princeton, NJ, 08544}
\email{kjhughes@math.princeton.edu}


\subjclass[2000]{Primary 42B25, 46E35}
\date{June 30, 2012}

\keywords{Discrete maximal operators; Hardy-Littlewood maximal operator; Sobolev spaces; bounded variation}

\maketitle

\centerline{\it Dedicated to Professor William Beckner on the occasion of his 70th birthday.}

\vskip 0.2 in

\begin{abstract} Given a discrete function $f:\Z^d \to \R$ we consider the maximal operator
$$Mf(\vec{n}) = \sup_{r\geq0} \frac{1}{N(r)} \sum_{\vec{m} \in \overline{\Omega}_r} \big|f(\vec{n} + \vec{m})\big|,$$
where $\big\{\overline{\Omega}_r\big\}_{r \geq 0}$ are dilations of a convex set $\Omega$ (open, bounded and with Lipschitz boudary) containing the origin and $N(r)$ is the number of lattice points inside $\overline{\Omega}_r$. We prove here that the operator $f \mapsto \nabla M f$ is bounded and continuous from $l^1(\Z^d)$ to $l^1(\Z^d)$. We also prove the same result for the non-centered version of this discrete maximal operator.

\end{abstract}

\section{Introduction}
\subsection{Background} For a function $f \in L^1_{loc}(\R^d)$ the (centered) Hardy-Littlewood maximal operator is defined as
\begin{equation*}
Mf(x) = \sup_{r>0} \frac{1}{m(B_r)} \int_{B_r} |f(x + y)|\,\dy,
\end{equation*}
where $B_r$ is the ball of radius $r$ centered at the origin and $m(B_r)$ is the $d$-dimensional Lebesgue measure of this ball. A basic result in harmonic analysis is that $M:L^p(\R^d) \to L^p(\R^d)$ is a bounded operator for $p>1$, and that it satisfies a weak-type estimate $M: L^1(\R^d) \to L^1_{weak}(\R^d)$ at the endpoint $p=1$. The same holds in the non-centered case, when we consider the supremum over balls that simply contain the point $x$. In both instances we may also replace the balls by dilations of a convex set with Lipschitz boundary (since these have bounded eccentricity). 

\smallskip

Over the last years several works addressed the problem of understanding the behavior of differentiability under a maximal operator. This program began with Kinnunen \cite{Ki} who investigated the action of the classical Hardy-Littlewood maximal operator in Sobolev spaces and showed that $M:W^{1,p}(\R^d) \to W^{1,p}(\R^d)$ is bounded for $p>1$. This paradigm that an $L^p$-bound implies a $W^{1,p}$-bound was later extended to a local version of the maximal operator \cite{KL}, to a fractional version \cite{KiSa} and to a multilinear version \cite{CM}. The continuity of $M:W^{1, p} \to W^{1, p}$ for $p>1$ was established by Luiro in \cite{Lu1} for the classical Hardy-Littlewood maximal operator and in \cite{Lu2} for its local version. Note that this is a non-trivial problem since we do not have sublinearity for the weak derivatives of the Hardy--Littlewood maximal function. 

\smallskip

Understanding the regularity at the endpoint case seems to be a deeper issue. In this regard, one of the main questions was posed by  Haj\l asz and Onninen in \cite[Question 1]{HO}: {\it is the operator $f \mapsto \nabla Mf$ bounded from $W^{1,1}(\R^d)$ to $L^1(\R^d)$}? Observe that a bound of the type
\begin{equation}\label{eq1.1}
\|\nabla M f\|_{L^1(\R^d)} \leq C \big(\|f\|_{L^1(\R^d)} + \|\nabla f\|_{L^1(\R^d)}\big)
\end{equation}
would imply, via a dilation invariance argument, the bound
\begin{equation}\label{eq1.2}
\|\nabla M f\|_{L^1(\R^d)} \leq C  \|\nabla f\|_{L^1(\R^d)},
\end{equation}
and so the fundamental question would be to compare the variation of $Mf$ with the variation of the original function $f$ (perhaps having the additional information that $f$ is integrable). In the work \cite{Ta}, Tanaka obtained the bound \eqref{eq1.2} in dimension $d=1$ for the {\it non-centered} Hardy-Littlewood maximal operator with constant $C=2$. This was later improved by Aldaz and P\'{e}rez L\'{a}zaro \cite{AP} who obtained \eqref{eq1.2} with the sharp $C=1$ under the minimal assumption that $f$ is of bounded variation (still, only in dimension $d=1$ and for the non-centered maximal operator). The progress in the centered case is very recent and also only in dimension $d = 1$. In \cite{Ku} O. Kurka showed that if $f$ is of bounded variation on $\R$ then
$${\rm Var} (Mf) \leq C \,{\rm Var}(f)$$
for some constant $C > 1$, where ${\rm Var}(f)$ denotes the total variation of the function $f$. This result was later adapted to the one-dimensional discrete setting by Temur \cite{Te}. It is likely that the sharp constant in KurkaÕs inequality should be $C = 1$, but this remains an open problem. Regularity results of similar flavour for the heat flow
maximal operator and the Poisson maximal operator were obtained in \cite{CS}.

\subsection{The discrete analogue} We address here this problem in the discrete setting. We shall generally denote by $\vec{n} = (n_1, n_2, ...., n_d)$ a vector in $\Z^d$ and for a function $f:\Z^d \to \R$ we define its $l^p$-norm as usual:
\begin{equation*}
\|f\|_{l^p(\Z^d)} = \left( \sum_{\vec{n} \in \Z^d} \big|f(\vec{n})\big|^p\right)^{1/p},
\end{equation*}
if $1 \leq p < \infty$, and 
\begin{equation*}
\|f\|_{l^{\infty}(\Z^d)} = \sup_{\vec{n} \in \Z^d}\big|f(\vec{n})\big|.
\end{equation*}
The gradient $\nabla f$ of a discrete function $f$ will be the vector
\begin{equation*}
\nabla f (\vec{n}) = \left(\frac{\partial f}{\partial x_1}(\vec{n}), \frac{\partial f}{\partial x_2}(\vec{n}), ..., \frac{\partial f}{\partial x_d}(\vec{n})\right),
\end{equation*}
where
\begin{equation*}
\frac{\partial f}{\partial x_i}(\vec{n}) := f(\vec{n} + \vec{e}_i) - f(\vec{n}),
\end{equation*}
and $\vec{e}_i = (0,0,...,1,...,0)$ is the canonical $i$-th base vector. 

\smallskip

Now let $\Omega \subset \R^d$ be a bounded open subset that is convex with Lipschitz boundary. Let us assume that  $\vec{0} \in {\rm int}(\Omega)$ and normalize it so that $\vec{e}_d \in \partial\Omega$.  We now define the set that will play the role of the ``ball of center $\vec{x}_0$ and radius $r$" in our maximal operators. For $r >0$ we write
\begin{equation*}
\overline{\Omega}_r (\vec{x}_0)= \big\{ \vec{x} \in \Z^d; \, r^{-1}(\vec{x} - \vec{x}_0) \in \overline{\Omega}\big\},
\end{equation*}
and for $r=0$ we put
\begin{equation*}
\overline{\Omega}_0(\vec{x}_0) = \big\{\vec{x}_0\big\}.
\end{equation*}
Whenever $\vec{x}_0 = \vec{0}$ we shall write  $\overline{\Omega}_r = \overline{\Omega}_r \big(\vec{0}\big)$ for simplicity. For instance, to work with regular $l^p$-balls one should consider $\Omega = \big\{\vec{x} \in \R^d; |\vec{x}|_p <1\big\}$. 

\smallskip

From now on we use the letter $M$ to denote the centered discrete maximal operator associated to $\Omega$ given by
\begin{equation}\label{eq1.3}
Mf(\vec{n}) = \sup_{r\geq0} \frac{1}{N(r)} \sum_{\vec{m} \in \overline{\Omega}_r} \big|f(\vec{n} + \vec{m})\big|,
\end{equation}
where $N(r)$ is the number of lattice points in the set $\overline{\Omega}_r$. We define the non-centered discrete maximal operator $\widetilde{M}$ associated to $\Omega$ in a similar way, by writing
\begin{equation}\label{eq1.4}
\widetilde{M}f(\vec{n}) = \sup_{r\geq0} \frac{1}{N(\vec{x}_0,r)} \sum_{\vec{m} \in \overline{\Omega}_r(\vec{x}_0)} |f(\vec{m})|,
\end{equation}
where the supremum is taken over all ``balls"  $\overline{\Omega}_r(\vec{x}_0)$ such that $\vec{n} \in \overline{\Omega}_r(\vec{x}_0)$, and $N(\vec{x}_0,r)$ denotes the number of lattice points in the set $\overline{\Omega}_r(\vec{x}_0)$. 

\smallskip

These convex $\Omega$-balls have roughly the same behavior as the regular balls, from the geometric and arithmetic points of view.  For instance, we have the following asymptotics \cite[Chapter VI \S 2, Theorem 2]{Lang} for the number of lattice points
\begin{equation}\label{asymp}
N(\vec{x}_0,r)= C_{\Omega}\, r^d + O\big(r^{d-1}\big)
\end{equation}
as $r \to \infty$, where $C_{\Omega} = m(\Omega)$ is the $d$-dimensional volume of $\Omega$, and the constant implicit in the big O notation depends only on the dimension $d$ and on the set $\Omega$ (e.g. if $\Omega$ is the $l^{\infty}$-ball  we have the exact expression $N(r) = (2\lfloor{r}\rfloor +1)^d$). 

\smallskip

As in the continuous case, both $M$ and $\widetilde{M}$ are of strong type $(p,p)$, if $p>1$, and of weak type $(1,1)$ (see for instance \cite[Chapter X]{SteinHA}). It is then natural to ask how the regularity theory transfers from the continuous to the discrete setting. By the triangle inequality one sees that, in the discrete setting,  the Sobolev norm $\|f\|_{l^p} + \|\nabla f\|_{l^p}$ is equivalent to the norm $\|f\|_{l^p}$, and thus the question of whether $M$ and $\widetilde{M}$ are bounded in discrete Sobolev spaces is trivially true for $p>1$. On the other hand, the regularity at the endpoint case $p=1$ is a very interesting topic and the main objective of this paper is to present the folllowing result.

\begin{theorem}[Endpoint regularity of discrete maximal operators] \label{thm1}
Let $d\geq 1$ and consider $M$ and $\widetilde{M}$ as defined in \eqref{eq1.3} and \eqref{eq1.4}.
\begin{enumerate}
\item [(i)] (Centered case) The operator $f \mapsto \nabla Mf$ is bounded and continuous from $l^1\big(\Z^d\big)$ to $l^1\big(\Z^d\big)$.
\item[(ii)] (Non-centered case) The operator $f \mapsto \nabla \widetilde{M}f$ is bounded and continuous from $l^1\big(\Z^d\big)$ to $l^1\big(\Z^d\big)$.
\end{enumerate}
\end{theorem}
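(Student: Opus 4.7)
The plan is to derive a uniform pointwise oscillation estimate for $\widetilde{M}f$ and then integrate it via a Fubini-type exchange of sums. By sublinearity and positivity, we may assume $f\geq 0$, and we focus on the non-centered case first. For each $\vec{n}\in\Z^d$ let $B(\vec{n})=\overline{\Omega}_{r(\vec{n})}(\vec{x}_0(\vec{n}))$ be a (near-)maximizing ball in the supremum defining $\widetilde{M}f(\vec{n})$. The heart of the argument is the pointwise estimate
\begin{equation*}
\bigl|\widetilde{M}f(\vec{n}+\vec{e}_i)-\widetilde{M}f(\vec{n})\bigr|\leq \frac{C\,\widetilde{M}f(\vec{n})}{r(\vec{n})+1}, \qquad i=1,\dots,d,
\end{equation*}
with $C=C(\Omega,d)$. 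To establish it, observe that the convexity of $\overline{\Omega}$ together with $\vec{0}\in\mathrm{int}(\Omega)$ yields $\vec{n}+\vec{e}_i\in\overline{\Omega}_{r(\vec{n})+\rho}(\vec{x}_0(\vec{n}))$ for some $\rho=O(1)$. Testing the enlarged ball $B'$ against $\widetilde{M}f(\vec{n}+\vec{e}_i)$, and using both $B(\vec{n})\subseteq B'$ and the lattice-point asymptotic \eqref{asymp} (which forces $N(r+\rho)/N(r)=1+O(1/r)$), gives $\widetilde{M}f(\vec{n}+\vec{e}_i)\geq (1-C/r(\vec{n}))\,\widetilde{M}f(\vec{n})$; a symmetric construction provides the reverse estimate.

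Expanding $\widetilde{M}f(\vec{n})=N(r(\vec{n}))^{-1}\sum_{\vec{k}\in B(\vec{n})}f(\vec{k})$ and swapping the order of summation,
\begin{equation*}
\sum_{\vec{n}\in\Z^d}\frac{\widetilde{M}f(\vec{n})}{r(\vec{n})+1}=\sum_{\vec{k}\in\Z^d}f(\vec{k})\,S(\vec{k}),\qquad S(\vec{k}):=\sum_{\vec{n}:\,\vec{k}\in B(\vec{n})}\frac{1}{N(r(\vec{n}))(r(\vec{n})+1)}.
\end{equation*}
Since both $\vec{n}$ and $\vec{k}$ lie in the single ball $B(\vec{n})$ of radius $r(\vec{n})$, the bounded diameter of $\overline{\Omega}$ forces $r(\vec{n})\geq c|\vec{n}-\vec{k}|$, and then $N(r(\vec{n}))\geq c(|\vec{n}-\vec{k}|+1)^d$ by \eqref{asymp}. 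Hence
\begin{equation*}
S(\vec{k})\leq C\sum_{\vec{n}\in\Z^d}\frac{1}{(|\vec{n}-\vec{k}|+1)^{d+1}}\leq C'
\end{equation*}
uniformly in $\vec{k}$, and combining these displays yields $\|\nabla \widetilde{M}f\|_{l^1}\leq C\|f\|_{l^1}$. The centered case is treated identically, using that $\overline{\Omega}_r(\vec{n})\subseteq \overline{\Omega}_{r+O(1)}(\vec{n}+\vec{e}_i)$ by the same convexity argument applied with center $\vec{n}+\vec{e}_i$.

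For the continuity assertion, let $f_k\to f$ in $l^1(\Z^d)$. Since $N(r)\geq 1$, one has the trivial pointwise bound $\widetilde{M}|f_k-f|(\vec{n})\leq\|f_k-f\|_{l^1}$, so $\widetilde{M}f_k\to\widetilde{M}f$ uniformly and therefore $\nabla \widetilde{M}f_k\to\nabla \widetilde{M}f$ pointwise. The passage from pointwise to $l^1$-convergence is, I expect, the main technical obstacle, because mere boundedness does not upgrade pointwise convergence to norm convergence for sublinear operators. My plan is a Brezis--Lieb-type argument: Fatou's lemma yields $\|\nabla \widetilde{M}f\|_{l^1}\leq\liminf\|\nabla \widetilde{M}f_k\|_{l^1}$ from pointwise convergence, and for the reverse inequality the pointwise oscillation estimate supplies, after unfolding, a control of the form $|\nabla \widetilde{M}f_k(\vec{n})|\leq C\sum_{\vec{k}}f_k(\vec{k})(|\vec{n}-\vec{k}|+1)^{-d-1}$ which, combined with the $l^1$-convergence of $\{f_k\}$, furnishes the equi-integrability at infinity needed to close the argument.
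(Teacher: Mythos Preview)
Your argument is essentially correct and follows a genuinely different route from the paper. One small imprecision: the displayed oscillation bound with $r(\vec{n})$ alone only controls $\widetilde{M}f(\vec{n})-\widetilde{M}f(\vec{n}+\vec{e}_i)$; the ``symmetric construction'' yields the companion bound with $r(\vec{n}+\vec{e}_i)$ and $\widetilde{M}f(\vec{n}+\vec{e}_i)$ instead. This is harmless, since after summing over $\vec{n}$ the two terms coincide by translation, but the stated two-sided pointwise inequality is not literally true. You should also make explicit the case split between $r(\vec{n})$ bounded and $r(\vec{n})$ large when claiming $N(r(\vec n))(r(\vec n)+1)\gtrsim (|\vec n-\vec k|+1)^{d+1}$, since the lattice-point asymptotic \eqref{asymp} only kicks in for $r$ beyond a threshold.

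The paper proceeds quite differently. Rather than a pointwise oscillation estimate, it fixes a direction, restricts $Mf$ to each line, and writes the variation as $2\sum_j\big(Mf(n',a_j)-Mf(n',b_j)\big)$ over the local extrema $\{a_j\},\{b_j\}$ of $Mf(n',\cdot)$. At each local maximum it takes the optimal ball and at the adjacent minimum a suitably enlarged ball containing it; a ``double counting'' computes the contribution of each value $f(\vec{p})$ to the resulting telescoping sum, and a separate \emph{summability lemma} shows this contribution is maximized when the $a_j$ are consecutive integers, yielding a finite universal constant. For continuity the paper uses the same machinery, splitting $\|\partial_d Mf_k\|_{l^1}$ into a bounded cube (handled via a Luiro-type radius-stability lemma) and two tail regions $S_2,S_3$ (handled by rerunning the extrema argument restricted to those regions), and then closes with Brezis--Lieb. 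Your approach is more elementary: it bypasses the extrema structure entirely and, crucially, produces the explicit pointwise majorant $|\nabla \widetilde{M}f_k|\le C\,K*f_k$ with $K(\vec m)=(|\vec m|+1)^{-d-1}\in l^1$, which makes the continuity step a one-line application of the generalized dominated convergence theorem (since $K*f_k\to K*f$ in $l^1$ by Young). What the paper's approach buys is a closer connection to the one-dimensional variation arguments that yield sharp constants (as in \cite{BCHP}), and a template that does not rely on the existence of a single summable convolution kernel dominating the gradient.
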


The boundedness part in Theorem \ref{thm1} provides a positive answer to the question of  Haj\l asz and Onninen \cite[Question 1]{HO} in the discrete setting, in all dimensions and for this general family of centered or non-centered maximal operators with convex $\Omega$-balls. The insight for this part was originated in a joint work of the authors with J. Bober and L. B. Pierce \cite{BCHP} where the case $d=1$ was treated, and it has two main ingredients: (i) a double counting argument to evaluate the maximal contribution of each point mass of $f$ to $\|\nabla Mf\|_{l^1}$; (ii) a summability argument over the sequence of local maxima and local minima of $Mf$. The technique is now refined to contemplate the $n$-dimensional case and this general family of operators.

\smallskip

The continuity result is a novelty in the endpoint regularity theory. Luiro's framework \cite{Lu1} for the continuity of the classical Hardy-Littlewood maximal operator in the Sobolev space $W^{1,p}(\R^d)$, for $p>1$, is not adaptable since it relies on the $L^p$-boundedness of this operator (which we do not have here), and we will only be able to use a few ingredients of it. The heart of our proof lies instead on the two core ideas mentioned above for the boundedness part and a useful application of the Brezis-Lieb lemma \cite{BL}.

\medskip

\noindent {\it Remark 1:} One might ask if inequality \eqref{eq1.2} holds in the discrete case, which would be a stronger result than our Theorem \ref{thm1}. This has only been proved in dimension $d=1$ for the {\it non-centered} maximal operator (see \cite{BCHP}) with sharp constant $C=1$ (i.e. the non-centered maximal function does not increase the variation of a function). Note that the dilation invariance argument to deduce \eqref{eq1.2} from \eqref{eq1.1} fails in the discrete setting.

\medskip

\noindent{\it Remark 2:} If we consider for instance the one-dimensional discrete centered Hardy-Littlewood maximal operator with regular balls applied to the delta function $f(0)=1$ and $f(n) = 0$ for $n\neq0$, we obtain $Mf(n) = 1/(2|n|+1)$ and thus $(Mf)'(n) = O\big(|n|^{-2}\big)$. Examples like this may raise the question on whether $\nabla Mf$ belongs to a better $l^p$ space (i.e. $p<1$) when $f \in l^1$. It turns out that the general answer is negative, and Theorem \ref{thm1} is sharp in this sense. To see this consider a function $f \in l^1(\Z)$ such that $f \notin l^p(\Z)$ for any $p<1$, for example $f(n) =1/\big(n \log^2 (n+1)\big)$ for $n\geq 1$, and zero otherwise. Now choose a sequence $1=a_1<a_2 < a_3 < a_4 < ....$ of natural numbers such that 

\begin{itemize}
\item[(i)] $a_2 \geq 4$.
\item[(ii)] $a_{n+1} - a_{n} > a_{n} - a_{n-1} + 2$, for any $n \geq 2$.
\item[(iii)] $f(1) > \frac{\|f\|_{1}}{2(a_2 - a_1) +1}$.
\item[(iv)] $\frac{f(1)}{3} > \frac{\|f\|_{1}}{2(a_2 - a_1-1) +1}$.
\item[(v)] $f(n) > \frac{\|f\|_{1}}{2(a_{n} - a_{n-1}) +1}$, for any $n \geq 2$.
\item[(vi)] $\frac{f(n)}{3} >  \frac{\|f\|_{1}}{2(a_{n} - a_{n-1}+1) +1}$, for any $n \geq 2$.
\end{itemize}
Define the function $g:\Z \to \R$ given by $g(a_n) = f(n)$ for $n\geq 1$, and zero otherwise. Note that $\|g\|_{l^1} = \|f\|_{l^1}$. Conditions (i)-(vi) above guarantee that, for the one-dimensional discrete centered Hardy-Littlewood maximal operator $M$, we have $Mg(a_n) = f(n)$ and $Mg(a_n +1) = \frac{f(n)}{3}$, for $n \geq 1$. Thus $(Mg)'(a_n) =  \frac{2f(n)}{3}$, and thus $(Mg)' \notin l^p(\Z)$ for any $p<1$.

\medskip

\noindent{\it Remark 3:}  Another interesting variant would be to consider the spherical maximal operator \cite{B, S} and its discrete analogue \cite{MSW}. The non-endpoint regularity of the continuous operator in Sobolev spaces was proved in \cite{HO} and it would be interesting to investigate what happens in the endpoint case, both in the continuous and in the discrete settings.

\section{Proof of Theorem \ref{thm1} - Boundedness} \label{boundedness}

\subsection{Centered case} We start with some arithmetic and geometric properties of the sets $\overline{\Omega}_r$. From \eqref{asymp} we can find a constant $c_1$ depending only on the dimension $d$ and the set $\Omega$ such that
\begin{equation}\label{asymp1}
N(\vec{x}_0,r) \leq C_{\Omega} \big(r + c_1\big)^d,
\end{equation}
and 
\begin{equation}\label{asymp2}
N(\vec{x}_0,r) \geq \max \Big\{ C_{\Omega} \big(\max\{r - c_1,0\}\big)^d, 1\Big\} =: C_{\Omega} \big(r - c_1\big)_+^d.
\end{equation}
Over \eqref{asymp2} it should be clear that if $\vec{x}_0 \in \Z^d$ we can take $r\geq 0$, and if $\vec{x}_0 \notin \Z^d$ we shall only be taking radii $r$ so that the corresponding ball contains at least one lattice point to calculate the average. We define $c_2>c_1$ as the constant such that  
\begin{equation*}
C_{\Omega}(c_2-c_1)^d =1.
\end{equation*}
Since $\Omega$ is bounded, there exists $\lambda >0$ (depending only on $\Omega$) such that $\overline{\Omega} \subset \overline{B_{\lambda}}$ (note that $\lambda \geq 1$ since $\vec{e}_d \in\overline{\Omega} $). This means that if $\vec{p} \in \overline{\Omega}_{r}(\vec{x}_0)$ then 
\begin{equation}\label{boundlambda}
\big|\vec{p} - \vec{x}_0\big| \leq \lambda r.
\end{equation}
These constants $c_1$, $c_2$ and $\lambda$ will be fixed throughout the rest of the paper.

\subsubsection{Set up} We want to show that
\begin{equation}\label{sec2.1}
\big\| \nabla Mf \|_{l^1(\Z^d)} \leq C \|f\|_{l^1(\Z^d)}
\end{equation}
for a suitable $C$ that might depend on $d$ and $\Omega$ in principle. We assume without loss of generality that $f\geq 0$. It suffices to prove that 
\begin{equation*}
\left\| \frac{\partial}{\partial x_i} Mf \right\|_{l^1(\Z^d)} \leq \widetilde{C} \|f\|_{l^1(\Z^d)},
\end{equation*}
for any $i = 1,2,...,d$. We will work with $i = d$ (the other cases are analogous). Let us write each $\vec{n} = (n_1, n_2, ..., n_d) \in \Z^d$ as $\vec{n} = (n', n_d)$, where $n' = (n_1, n_2, ..., n_{d-1}) \in \Z^{d-1}$. For each $n' \in \Z^{d-1}$ we will consider the sum over the line perpendicular to $\Z^{d-1}$ passing through $n'$, i.e.
\begin{equation*}
\sum_{l=-\infty}^{\infty}  \left| \frac{\partial}{\partial x_d} Mf \big(n',l\big)\right|= \sum_{l=-\infty}^{\infty} \big| Mf\big(n', l+1\big)  - Mf\big(n',l\big)\big|.
\end{equation*}
For a discrete function $g: \Z \to \R$ we say that a point $a$ is a {\it local maximum} of $g$ if $g(a-1) \leq g(a)$ and $g(a+1) < g(a)$. Analogously, we say that a point $b$ is a {\it local minimum} of $g$ if $g(b-1) \geq g(b)$ and $g(b+1) > g(b)$. We let $\{a_i\}_{i \in \Z}$ and $\{b_i\}_{i \in \Z}$ be the sequences of local maxima and local minima of $Mf\big(n', \cdot\big)$ ordered as follows:
\begin{equation*}
...< b_{-1} < a_{-1} < b_0 < a_0 < b_1 < a_1< ....
\end{equation*}
Observe that this sequence (that depends on $n'$) might be finite (either on one side or both). In this case, since $Mf \in l^1_{weak}(\Z^d)$, it would terminate in a local maximum and minor modifications would have to be done in the argument we present below. For simplicity let us proceed with the case where the sequence of local extrema is infinite on both sides. In this case we have
\begin{equation}\label{sum1}
\sum_{l=-\infty}^{\infty}  \left| \frac{\partial}{\partial x_d} Mf \big(n',l\big)\right| = 2 \sum_{j=-\infty}^{\infty}  \left\{Mf\big(n', a_j\big)  - Mf\big(n',b_j\big)\right\}.
\end{equation}

\subsubsection{The double counting argument} Let $r_j$ be the minimum radius such that the supremum in \eqref{eq1.3} is attained for the point $\big(n', a_j\big)$, i.e.
\begin{equation}\label{average_r_j}
Mf\big(n', a_j\big) = A_{r_j} f\big(n',a_j\big) :=  \frac{1}{N(r_j)} \sum_{\vec{m} \in \overline{\Omega}_{r_j}} f\big(\big(n',a_j\big) + \vec{m}\big).
\end{equation}
If we consider the radius $s_j = r_j + (a_j - b_j)$ centered at the point $\big(n', b_j\big)$ we obtain 
\begin{equation}\label{average_s_j}
Mf\big(n', b_j\big) \geq A_{s_j} f\big(n',b_j\big) =  \frac{1}{N(r_j + (a_j - b_j))} \sum_{\vec{m} \in \overline{\Omega}_{s_j}} f\big(\big(n',b_j\big) + \vec{m}\big).
\end{equation}
The observation that motivates this particular choice of the radius $s_j$ is that $\overline{\Omega}_{r_j}\big(\big(n',a_j\big)\big) \subset \overline{\Omega}_{s_j}\big(\big(n',b_j\big)\big)$, which follows from the convexity of $\overline{\Omega}$ and the fact that $\vec{e}_d \in \partial  \overline{\Omega}$. 

\smallskip

From \eqref{sum1}, \eqref{average_r_j} and \eqref{average_s_j} we obtain
\begin{align}
\begin{split}\label{sum2}
\left\| \frac{\partial}{\partial x_d} Mf \right\|_{l^1(\Z^d)} & = \sum_{n' \in \Z^{d-1}} \sum_{l=-\infty}^{\infty}  \left| \frac{\partial}{\partial x_d} Mf \big(n',l\big)\right| \\
&  \leq \sum_{n' \in \Z^{d-1}} 2 \sum_{j=-\infty}^{\infty}  \left\{A_{r_j}f\big(n', a_j\big)  - A_{s_j}f\big(n',b_j\big)\right\},
\end{split}
\end{align}
where $a_j = a_j(n')$ and $b_j = b_j(n')$. We now consider a general point $\vec{p} = (p_1, p_2,$ $...., p_d) \in \Z^d$, also represented as $\vec{p} = \big(p', p_d\big)$ with $p' \in \Z^{d-1}$. We want to evaluate the maximum contribution that $f\big(p', p_d\big)$ might have to the right-hand side of \eqref{sum2}. For given $n'$ and $j$, this contribution will only be positive if the point $\big(p', p_d\big)$ belongs to both sets $\overline{\Omega}_{r_j}\big(\big(n',a_j\big)\big)$ and $\overline{\Omega}_{s_j}\big(\big(n',b_j\big)\big)$ (in case the point $\big(p', p_d\big)$ belongs only to $\overline{\Omega}_{s_j}\big(\big(n',b_j\big)\big)$ or does not belong to any of these $\Omega$-balls, the contribution is negative or zero and we disregard it). 

\smallskip

Since  $\big(p', p_d\big) \in \overline{\Omega}_{r_j}\big(\big(n',a_j\big)\big)$, from \eqref{boundlambda} we have 
\begin{equation}\label{boundlambda*}
\big|\big(p', p_d\big) - \big(n', a_j\big)\big| \leq \lambda r_j.
\end{equation}
Using \eqref{asymp1},  \eqref{asymp2} and \eqref{boundlambda*}, we can estimate the maximum contribution of $f\big(p', p_d\big)$, for given $n'$ and $j$, on the associated summand on right-hand side of \eqref{sum2} as
\begin{align}\label{bigsum}
\begin{split}
&f\big(p', p_d\big)  \left(  \frac{1}{N(r_j)}  -  \frac{1}{N(r_j + a_j-b_j)}\right)\\
&  \leq  f\big(p', p_d\big)  \left(  \frac{1}{N(r_j)}  -  \frac{1}{N(r_j + a_j-a_{j-1})}\right)\\
& \leq  f\big(p', p_d\big)  \left(  \frac{1}{ C_{\Omega}(r_j - c_1)^d_+}  -  \frac{1}{C_{\Omega} (r_j + a_j - a_{j-1} + c_1)^d}\right)\\
& \leq  f\big(p', p_d\big)  \left(  \frac{1}{C_{\Omega}\left(\lambda^{-1} \big(|p' - n'|^2 + (p_d - a_j)^2\big)^{1/2} - c_1\right)^d_+} \right.\\
&  \ \ \ \ \ \ \ \ \ \ \ \ \ \ \ -  \left. \min\left\{ \frac{1}{C_{\Omega}\left(c_2+a_j - a_{j-1}+ c_1\right)^d},\right.\right.\\
& \ \ \ \ \ \ \ \ \ \ \ \ \ \ \    \left. \left. \frac{1}{C_{\Omega}\left(\lambda^{-1}\big(|p' - n'|^2 + (p_d - a_j)^2\big)^{1/2} +a_j - a_{j-1}+ c_1\right)^d}\right\} \right),
\end{split}
\end{align}
In the last inequality of \eqref{bigsum} we have used \eqref{boundlambda*} and the fact that the function
\begin{equation*}
g(x) =  \left(  \frac{1}{ C_{\Omega} (x - c_1)^d}  -  \frac{1}{C_{\Omega} (x+ a_j - a_{j-1} + c_1)^d}\right)
\end{equation*}
is decreasing as $x \to \infty$, for $x \geq c_2$. If we sum \eqref{bigsum} over all $j$ and then over all $n' \in \Z^{d-1}$ we find an upper bound for the contribution of $f\big(p', p_d\big)$ to the right-hand side of \eqref{sum2}. This is given by
\begin{align}\label{bigsum2}
\begin{split}
& 2 f\big(p', p_d\big) \sum_{n' \in \Z^d} \sum_{j=-\infty}^{\infty} \left(  \frac{1}{C_{\Omega}\left(\lambda^{-1} \big(|p' - n'|^2 + (p_d - a_j)^2\big)^{1/2} - c_1\right)^d_+} \right.\\
&  \ \ \ \ \ \ \ \ \ \ \ \ \ \ \   -  \left. \min\left\{ \frac{1}{C_{\Omega}\left(c_2+a_j - a_{j-1}+ c_1\right)^d},\right.\right.\\
& \ \ \ \ \ \ \ \ \ \ \ \ \ \ \     \left. \left. \frac{1}{C_{\Omega}\left(\lambda^{-1}\big(|p' - n'|^2 + (p_d - a_j)^2\big)^{1/2} +a_j - a_{j-1}+ c_1\right)^d}\right\} \right).
\end{split}
\end{align}

\subsubsection{The summability argument} We now prove that the double sum in \eqref{bigsum2} is bounded {\it independently of the the point $\big(p', p_d\big)$ and the increasing sequence $\{a_j\}$}. For this we may assume $p' = 0$ (since the sum is over all $n' \in \Z^{d-1}$ we can just change variables here to $m' = n' + p'$). We also assume $p_d =0$, since we may consider the increasing sequence $a_j' = a_j + p_d$. The problem becomes then to bound
\begin{align}\label{bigsum3}
\begin{split}
& S(\{a_j\}) = \sum_{n' \in \Z^{d-1}} \sum_{j=-\infty}^{\infty} \left(  \frac{1}{C_{\Omega}\left(\lambda^{-1} \left(|n'|^2 + a_j^2\right)^{1/2} - c_1\right)^d_+} \right.\\
&  \!-  \!\left. \min\!\left\{ \!\!\frac{1}{C_{\Omega}\!\left(c_2\!+\!a_j\! -\! a_{j-1}\!+\! c_1\right)^d}, \frac{1}{C_{\Omega}\!\left(\!\lambda^{-1}\!\left(|n'|^2 + a_j^2\right)^{1/2} \!+\!a_j\! -\! a_{j-1}\!+\! c_1\!\right)^d}\!\!\right\} \!\!\right)
\end{split}
\end{align}
independently of the increasing sequence $\{a_j\}$ of integers. The key tool is the lemma below.

\begin{lemma}[Summability lemma] \label{summability}
For any increasing sequence  $\{a_j\}_{j \in \Z}$ of integers consider the sum $ S(\{a_j\}) $ given by \eqref{bigsum3}. The sum  $ S(\{a_j\}) $ is maximized for the sequence $a_j = j$, and in this case the sum is finite.
\end{lemma}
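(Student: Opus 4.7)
The plan is to reduce the lemma to two claims: \emph{(a)} $S(\{j\})<\infty$, and \emph{(b)} $S(\{a_j\}) \leq S(\{j\})$ for every increasing integer sequence $\{a_j\}$. For convenience write $\tilde F(\rho) := \bigl(C_\Omega(\lambda^{-1}\rho - c_1)_+^d\bigr)^{-1}$, which is a decreasing function of $\rho\geq 0$, and $\rho_j := (|n'|^2 + a_j^2)^{1/2}$, so that the generic summand of \eqref{bigsum3} reads
\[
U(a_j,\delta_j;n') = \tilde F(\rho_j) - \min\!\left\{\tfrac{1}{C_\Omega(c_2+\delta_j+c_1)^d},\ \tilde F\bigl(\rho_j + \lambda(\delta_j+2c_1)\bigr)\right\},
\]
with $\delta_j = a_j - a_{j-1}$.

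For (a), the key observation is that whenever $\rho_j \geq c_2\lambda$ the minimum is attained at the second entry; applying the mean value theorem to $x \mapsto x^{-d}$ yields
\[
U(j,1;n') \leq \tilde F(\rho_j) - \tilde F\bigl(\rho_j + \lambda(1+2c_1)\bigr) \leq \frac{C}{(|n'|^2+j^2)^{(d+1)/2}}
\]
for a constant $C = C(d,\lambda,c_1,C_\Omega)$. Summing over the complement of a fixed ball around the origin produces the convergent series $C\sum_{m\in\Z^d\setminus\{0\}} |m|^{-(d+1)}$ (since $d+1>d$), while the complementary bounded region contains only finitely many lattice points, each contributing at most $1$ by \eqref{asymp2}. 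Hence $S(\{j\})<\infty$.

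For (b), the plan is to argue by iterated gap-refinement: if some $\delta_j\geq 2$, choose an integer $a^* \in (a_{j-1},a_j)$ and insert it into $\{a_j\}$. With $\delta^- = a^* - a_{j-1}$ and $\delta^+ = a_j - a^*$, the change in the inner sum is
\[
\Delta \;=\; U(a^*,\delta^-;n') + U(a_j,\delta^+;n') - U(a_j,\delta_j;n'),
\]
and the central claim is $\Delta \geq 0$. Since $U(a_j,\,\cdot\,;n')$ is non-decreasing in its second argument, the difference $U(a_j,\delta_j;n') - U(a_j,\delta^+;n')$ is itself a telescoping difference of $\min$-quantities that is controlled by the mean value theorem, and it will be dominated by the positive new contribution $U(a^*,\delta^-;n')$ at the point $a^*$ (which sits no farther from $0$ along the $d$-axis than $a_j$). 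Iterating this insertion collapses $\{a_j\}$ to $\Z$ in a monotone fashion, and passing to the limit via monotone convergence on the finite partial sums gives $S(\{a_j\})\leq S(\{j\})$.

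The main obstacle is verifying $\Delta \geq 0$ for a single insertion. The $\min$ switches between a constant-like regime when $\rho \leq c_2\lambda$ and a power-decay regime when $\rho>c_2\lambda$, forcing a case split on which branch is active at $\delta^+$ versus at $\delta_j$. Moreover, the comparison between $\rho^* = (|n'|^2+(a^*)^2)^{1/2}$ and $\rho_j$ is straightforward when $a_{j-1}$ and $a_j$ share a sign but more delicate in the straddling case $a_{j-1}<0<a_j$, where the inserted point $a^*$ can be closer to the origin than both endpoints; handling that case requires exploiting the convexity of $\tilde F$ to match the telescoped loss against the newly created gain.
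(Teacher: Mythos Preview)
Your overall architecture matches the paper's: prove finiteness for the sequence $a_j=j$, and show by an insertion argument that refining gaps only increases $S$. Part (a) is fine and essentially what the paper does in \eqref{Ctilde}. The gap is in part (b): you correctly isolate the central claim $\Delta\ge 0$, but you do not actually prove it. Your heuristic that ``$a^*$ sits no farther from $0$ along the $d$-axis than $a_j$'' is simply false in the straddling case you yourself flag (e.g.\ $a_{j-1}=-10$, $a_j=1$, $a^*=-5$), and your fallback suggestion of ``exploiting the convexity of $\tilde F$'' is left as a remark, not an argument.

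The paper closes exactly this gap with one clean observation that you are missing: the function $g(x)=\dfrac{1}{C_\Omega x^d}-\dfrac{1}{C_\Omega(x+\delta^-)^d}$ is decreasing, so $\Delta\ge 0$ reduces to the scalar inequality
\[
\lambda^{-1}\bigl(|n'|^2+a_j^2\bigr)^{1/2}+(a_j-a^*)\ \ge\ \lambda^{-1}\bigl(|n'|^2+(a^*)^2\bigr)^{1/2},
\]
and this holds for \emph{every} placement of $a^*<a_j$ precisely because $\lambda\ge 1$ (write $a_j=a^*+t$, $t>0$, and differentiate in $t$). This is where the normalization $\vec{e}_d\in\partial\Omega$ and hence $\lambda\ge 1$ is used; without it the straddling case can genuinely fail. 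No convexity argument or case split is needed once you see this. So your plan is on the right track, but the proof is incomplete until you supply this inequality (or an equivalent) in place of the vague appeal to convexity.
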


\begin{proof} Suppose we have two terms in the sequence, say $a_0$ and $a_1$ that are not consecutive. Let us prove that if we introduce a term $\tilde{a}_0$ in the sequence, with $a_0 < \tilde{a}_0 < a_1$, the overall sum does not decrease. For this it is sufficient to see that 
\begin{align*}
\begin{split}
&\left(  \frac{1}{C_{\Omega}\left(\lambda^{-1} \big(|n'|^2 + a_1^2\big)^{1/2} - c_1\right)^d_+} \right.\\
&  \ \ \ \ -  \left. \min\left\{ \frac{1}{C_{\Omega}\!\left(c_2\!+\!a_1\! -\! a_{0}\!+\! c_1\right)^d}, \frac{1}{C_{\Omega}\!\left(\lambda^{-1}\big(|n'|^2 + a_1^2\big)^{1/2} \!+\!a_1\! -\! a_{0}\!+\! c_1\right)^d}\right\} \right)\\
& \leq \left(  \frac{1}{C_{\Omega}\left(\lambda^{-1} \big(|n'|^2 + a_1^2\big)^{1/2} - c_1\right)^d_+} \right.\\
&  \ \ \ \ -  \left. \min\left\{ \frac{1}{C_{\Omega}\!\left(c_2\!+\!a_1\! -\! \tilde{a}_{0}\!+\! c_1\right)^d}, \frac{1}{C_{\Omega}\!\left(\lambda^{-1} \big(|n'|^2 + a_1^2\big)^{1/2} \!+\!a_1\! -\! \tilde{a}_{0}\!+\! c_1\right)^d}\right\} \right)\\
& \ \ + \left(  \frac{1}{C_{\Omega}\left(\lambda^{-1} \big(|n'|^2 + \tilde{a}_0^2\big)^{1/2} - c_1\right)^d_+} \right.\\
&  \ \ \ \ -  \left. \min\left\{ \frac{1}{C_{\Omega}\!\left(c_2\!+\!\tilde{a}_0\! -\! a_{0}\!+\! c_1\right)^d}, \frac{1}{C_{\Omega}\!\left(\lambda^{-1}\big(|n'|^2 + \tilde{a}_0^2\big)^{1/2} \!+\!\tilde{a}_0\! -\! a_{0}\!+\! c_1\right)^d}\right\} \right),
\end{split}
\end{align*}
and this is true if and only if 
\begin{align*}
\begin{split}
&\left(  \min\left\{ \frac{1}{C_{\Omega}\!\left(c_2\!+\!a_1\! -\! \tilde{a}_{0}\!+\! c_1\right)^d}, \frac{1}{C_{\Omega}\!\left(\lambda^{-1}\big(|n'|^2 + a_1^2\big)^{1/2} \!+\!a_1\! -\! \tilde{a}_{0}\!+\! c_1\right)^d}\right\} \right.\\
&  \ \ \ \ -  \left. \min\left\{ \frac{1}{C_{\Omega}\!\left(c_2\!+\!a_1\! -\! a_{0}\!+\! c_1\right)^d}, \frac{1}{C_{\Omega}\!\left(\lambda^{-1}\big(|n'|^2 + a_1^2\big)^{1/2} \!+\!a_1\! -\! a_{0}\!+\! c_1\right)^d}\right\} \right)\\
& \leq  \left(  \frac{1}{C_{\Omega}\left(\lambda^{-1} \big(|n'|^2 + \tilde{a}_0^2\big)^{1/2} - c_1\right)^d_+} \right.\\
&  \ \ \ \ -  \left. \min\left\{ \frac{1}{C_{\Omega}\!\left(c_2\!+\!\tilde{a}_0\! -\! a_{0}\!+\! c_1\right)^d}, \frac{1}{C_{\Omega}\!\left(\lambda^{-1}\big(|n'|^2 + \tilde{a}_0^2\big)^{1/2} \!+\!\tilde{a}_0\! -\! a_{0}\!+\! c_1\right)^d}\right\} \right).
\end{split}
\end{align*}
The last inequality can be verified from the fact that 
\begin{equation*}
g(x) = \frac{1}{C_{\Omega}x^d}  - \frac{1}{C_{\Omega}(x + \tilde{a}_0- a_{0})^d}
\end{equation*}
is decreasing as $x \to \infty$, for $x\geq 0$, and the fact that 
\begin{equation*}
\lambda^{-1}\left(|n'|^2 + a_1^2\right)^{1/2} +\big(a_1 - \tilde{a}_{0}\big)\geq \lambda^{-1}\left(|n'|^2 + \tilde{a}_0^2\right)^{1/2}.
\end{equation*}
The latter follows by calling $a_1 = \tilde{a}_{0} + t$ (note that $t\geq0$), and then differentiating the expression with respect to the variable $t$ to check the sign (here we make use of the fact that $\lambda \geq 1$, since we might have $|\tilde{a}_0| > |a_1|$). 

\smallskip

Therefore the required sum \eqref{bigsum3} is bounded by above by the sum considering the particular sequence $a_j = j$. This gives us 
\begin{align}\label{Ctilde}
\begin{split}
&  S = \sum_{n' \in \Z^{d-1}} \sum_{j=-\infty}^{\infty} \left(  \frac{1}{C_{\Omega}\left(\lambda^{-1} \big(|n'|^2 + j^2\big)^{1/2} - c_1\right)^d_+} \right.\\
& \ \ \ \ \ \ \ \ \ -  \left. \min\left\{ \frac{1}{C_{\Omega}\!\left(c_2\!+1\!+\! c_1\right)^d}, \frac{1}{C_{\Omega}\!\left(\lambda^{-1} \big(|n'|^2 + j^2\big)^{1/2} \!+1\!+\! c_1\right)^d}\right\} \right)\\
& \!\!= \!\!\sum_{\vec{n}\in \Z^d} \!\!\left(\!  \frac{1}{C_{\Omega}\big(\lambda^{-1} |\vec{n}| - c_1\big)^d_+} -  \min\!\left\{\! \frac{1}{C_{\Omega}\!\left(c_2\!+\!1\!+\! c_1\right)^d}\,, \frac{1}{C_{\Omega}\big(\lambda^{-1} |\vec{n}|\!+\!1\!+\! c_1\big)^d}\!\right\}\! \!\right)\\
& \leq \sum_{\lambda^{-1}|\vec{n}| \leq c_2} 1  \ +  \sum_{\lambda^{-1}|\vec{n}|>c_2} \left( \frac{1}{C_{\Omega}\big(\lambda^{-1} |\vec{n}| - c_1\big)^d} -  \frac{1}{C_{\Omega} \big(\lambda^{-1} |\vec{n}|+1+ c_1\big)^d}\right) \\
&= \widetilde{C}(d, \Omega) < \infty.
\end{split}
\end{align}
\end{proof}

\subsubsection{Conclusion}
We have proved that the contribution of a generic point $f(p_1, p_2,$ $..., p_d)$ to  the right-hand side of \eqref{sum2} is at most a constant $2\, \widetilde{C} =2 \,\widetilde{C}(d,\Omega) $ and therefore, when we sum over all points,  we get
\begin{equation*}
\Big\| \frac{\partial}{\partial x_d} Mf \Big\|_{l^1(\Z^d)} \leq 2 \, \widetilde{C} \|f\|_{l^1(\Z^d)}.
\end{equation*}
Since the same holds for any direction we obtain the desired inequality \eqref{sec2.1}.

\subsection{Non-centered case} We will indicate here the basic modifications that have to be made in comparison with the proof for the centered case. The set up is the same up to the beginning of the double counting argument. For a given point $\big(n',a_j\big)$ we can pick a point $\vec{x}_j$ and a radius $r_j$ such that $\big(n', a_j\big) \in \overline{\Omega}_{r_j}(\vec{x}_j)$ and the average over the set $\overline{\Omega}_{r_j}(\vec{x}_j)$ realizes the supremum in the maximal function, i.e.,
\begin{equation}\label{average_r_j_x_j}
\widetilde{M}f\big(n', a_j\big) = A_{(\vec{x}_j, r_j)} f\big(n',a_j\big) :=  \frac{1}{N(\vec{x}_j,r_j)} \sum_{\vec{m} \in \overline{\Omega}_{r_j}(\vec{x}_j)} f(\vec{m}).
\end{equation}
This is guaranteed since any maximizing sequence $\big(\vec{x}_j^k, r_j^k\big)$ of the right-hand side of \eqref{average_r_j_x_j} must be stationary. In fact, we should have the sequence $\big( \vec{x}_j^k, r_j^k\big)$ trapped in a bounded subset $\big|\vec{x}_j^k\big|\leq R$ and $r_j^k \leq R$, for some $R>0$ (since $f \in l^1(\Z^d)$), and then we would have only a finite number of subsets of $\Z^d$ to choose from for the sum in \eqref{average_r_j_x_j}.

\smallskip

We now consider the $\Omega$-ball of radius $s_j = r_j + a_j - b_j$ centered at $\vec{y}_j = \vec{x}_j - (a_j - b_j)\vec{e}_d$. Note that $(n', b_j) \in \overline{\Omega}_{r_j}(\vec{y}_j) \subset \overline{\Omega}_{s_j}(\vec{y}_j) $. From the convexity of $\overline{\Omega}$ and the fact that $\vec{e}_d \in \partial \overline{\Omega}$ we also have $\overline{\Omega}_{r_j}(\vec{x}_j) \subset \overline{\Omega}_{s_j}(\vec{y}_j)$. Therefore
\begin{equation}\label{average_s_j_y_j}
\widetilde{M}f\big(n', b_j\big) \geq A_{(\vec{y}_j,s_j)} f\big(n',b_j\big) =  \frac{1}{N(\vec{y}_j,s_j)} \sum_{\vec{m} \in \overline{\Omega}_{s_j}(\vec{y}_j)} f(\vec{m}),
\end{equation}
and 
\begin{align}
\begin{split}\label{sum2nc}
\left\| \frac{\partial}{\partial x_d} \widetilde{M}f \right\|_{l^1(\Z^d)} & = \sum_{n' \in \Z^{d-1}} \sum_{l=-\infty}^{\infty}  \left| \frac{\partial}{\partial x_d} \widetilde{M}f \big(n',l\big)\right| \\
&  \leq \sum_{n' \in \Z^{d-1}} 2 \sum_{j=-\infty}^{\infty}  \left\{A_{( \vec{x}_j,r_j)}f\big(n', a_j\big)  - A_{(\vec{y}_j,s_j)}f\big(n',b_j\big)\right\}.
\end{split}
\end{align}

\smallskip

Consider a point $\vec{p} = \big(p', p_d\big) \in \Z^d$. The term $f\big(p', p_d\big)$ will only contribute positively to a summand on the right-hand side of \eqref{sum2nc} if $\big(p', p_d\big) \in \overline{\Omega}_{r_j}(\vec{x}_j)$. In this case, since $\big(n', a_j\big)  \in \overline{\Omega}_{r_j}(\vec{x}_j)$, using \eqref{boundlambda} we have 
\begin{equation}\label{nccond}
\big|\big(p', p_d\big) - (n', a_j)\big| \leq 2\, \lambda \, r_j.
\end{equation}
The rest of the proof is the same.

\section{Proof of Theorem \ref{thm1} - Continuity} 

\subsection{Centered case} We want to show that if $f_k \to f$ in $l^1(\Z^d)$ then $\nabla Mf_k \to \nabla M f$ in $l^1(\Z^d)$. 

\subsubsection{Set up} Since $\big||f_k|-|f|\big| \leq \big|f_k - f\big|$ and the maximal operator only sees the absolute value we may assume without loss of generality that $f_k \geq 0$ for all $k$, and that $f\geq0$. It suffices to prove the result for each partial derivative, i.e. that
\begin{equation}\label{limit}
\left\|\frac{\partial}{\partial x_i}Mf_k - \frac{\partial}{\partial x_i}Mf\right\|_{l^1(\Z^d)} \to 0
\end{equation}
as $k \to \infty$, for each $i=1,2,...,d$. We shall prove it for $i=d$ and the other cases are analogous. 

\smallskip

\subsubsection{A discrete version of Luiro's lemma}
For a function $g \in l^1(\Z^d)$ and a point $\vec{n} \in \Z^d$ let us define $\mc{R}g(\vec{n})$ as the set of all radii that realize the supremum in the maximal function at the point $\vec{n}$, i.e.
\begin{equation*}
\mc{R}g(\vec{n}) = \left\{r \in [0,\infty);\ Mg(\vec{n}) = A_{r} |g|(\vec{n}) =  \frac{1}{N(r)} \sum_{\vec{m} \in \overline{\Omega}_{r}}\big|g\big(\vec{n} + \vec{m}\big)\big|\right\}.
\end{equation*}
The next lemma gives us information about the convergence of these sets of radii. It can be seen as the discrete analogue of \cite[Lemma 2.2] {Lu1}.
\begin{lemma}\label{radii}
Let $f_k \to f $ in  $l^1(\Z^d)$. Given $R>0$ there exists $k_0= k_0(R)$ such that, for $k\geq k_0$, we have $\mc{R}f_k(\vec{n}) \subset \mc{R}f(\vec{n})$ for each $\vec{n} \in  \overline{B_R}$.
\end{lemma}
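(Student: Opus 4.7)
The plan is to exploit the piecewise-constant structure of the averaging process. For any $g \in l^1(\Z^d)$ and any $\vec{n} \in \Z^d$, the lattice set $\overline{\Omega}_r$ changes only when $r$ crosses one of a discrete increasing sequence of critical radii $0 = \rho_0 < \rho_1 < \rho_2 < \cdots \to \infty$; these radii are the values $\inf\{s>0:\vec{m}\in s\overline{\Omega}\}$ attached to lattice points $\vec{m} \in \Z^d$ and depend only on $\Omega$. Therefore the function $r \mapsto A_r|g|(\vec{n})$ is constant on each interval $[\rho_j, \rho_{j+1})$, with the same jump locations for every $g$ and every $\vec{n}$, and
\[
\mc{R}g(\vec{n}) = \bigcup_{j \in J_g(\vec{n})} [\rho_j, \rho_{j+1}), \qquad J_g(\vec{n}) := \{j \geq 0 : A_{\rho_j}|g|(\vec{n}) = Mg(\vec{n})\}.
\]
The lemma is therefore reduced to showing the inclusion of optimal index sets $J_{f_k}(\vec{n}) \subset J_f(\vec{n})$ for every $\vec{n} \in \overline{B_R}$ once $k$ is large enough.

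If $f \equiv 0$ the inclusion is trivial since $\mc{R}f(\vec{n}) = [0,\infty)$, so I assume $f \not\equiv 0$, which forces $Mf(\vec{n}) > 0$ at every $\vec{n}$. Since $f \in l^1(\Z^d)$ implies $A_{\rho_j}|f|(\vec{n}) \to 0$ as $j \to \infty$, the set $J_f(\vec{n})$ is finite and only finitely many indices yield an average above $Mf(\vec{n})/2$. Consequently
\[
\delta(\vec{n}) := Mf(\vec{n}) - \max\bigl\{A_{\rho_j}|f|(\vec{n}) : j \notin J_f(\vec{n})\bigr\}
\]
is a maximum over a finite set and is strictly positive. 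Because $\overline{B_R} \cap \Z^d$ is finite, this yields the uniform positive gap $\delta_0 := \min_{\vec{n} \in \overline{B_R}} \delta(\vec{n}) > 0$.

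Next I transfer information from $f$ to $f_k$ using $l^1$ convergence. The bound $N(r) \geq 1$ together with the sublinearity of $M$ produces the uniform comparisons
\[
\bigl|A_{\rho_j}|f_k|(\vec{n}) - A_{\rho_j}|f|(\vec{n})\bigr| \leq \|f_k - f\|_{l^1(\Z^d)}, \qquad \bigl|Mf_k(\vec{n}) - Mf(\vec{n})\bigr| \leq \|f_k - f\|_{l^1(\Z^d)},
\]
both uniform in $j$ and $\vec{n}$. Choosing $k_0$ so that $\|f_k - f\|_{l^1(\Z^d)} < \delta_0/3$ for $k \geq k_0$, and fixing $\vec{n} \in \overline{B_R}$, $j \in J_{f_k}(\vec{n})$ and any $i \in J_f(\vec{n})$, the chain
\begin{align*}
A_{\rho_j}|f|(\vec{n}) &\geq A_{\rho_j}|f_k|(\vec{n}) - \tfrac{\delta_0}{3} = Mf_k(\vec{n}) - \tfrac{\delta_0}{3} \geq A_{\rho_i}|f_k|(\vec{n}) - \tfrac{\delta_0}{3} \\
&\geq Mf(\vec{n}) - \tfrac{2\delta_0}{3} > Mf(\vec{n}) - \delta(\vec{n})
\end{align*}
forces $j \in J_f(\vec{n})$, which is exactly the desired inclusion. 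The main obstacle is really the positivity of $\delta(\vec{n})$: one must verify that the suboptimal averages do not accumulate up to $Mf(\vec{n})$, and this is precisely what the $l^1$ hypothesis supplies through $A_{\rho_j}|f|(\vec{n}) \to 0$. Everything else is quantitative bookkeeping.
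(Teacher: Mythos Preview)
Your proof is correct and follows essentially the same approach as the paper: identify a positive gap $\delta(\vec{n})$ (the paper's $\epsilon(\vec{n})$) between the maximal average and the next-largest one, take the minimum over the finite set $\overline{B_R}\cap\Z^d$, and then run the same chain of inequalities. The only cosmetic difference is that the paper bounds $|A_r f_k(\vec{n})-A_r f(\vec{n})|$ by $\|f_k-f\|_{l^\infty}$ (noting $l^1\hookrightarrow l^\infty$ on $\Z^d$), whereas you use $\|f_k-f\|_{l^1}$ via $N(r)\geq 1$; both work equally well.
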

\begin{proof}
Fix $\vec{n} \in \overline{B_R}$ and consider the application $r\mapsto A_r f(\vec{n})$ for $r\geq 0$. From the fact that $f \in l^1(\Z^d)$ together with \eqref{asymp2} we can see that $A_r f(\vec{n}) \to 0$ as $r \to \infty$. Therefore the set of values in the image $\{A_r f(\vec{n}); \ r\geq0\}$ such that $A_r f(\vec{n}) \geq \hh Mf(\vec{n})$ is a finite set. There exists then a ``second larger" value which falls short of the maximum by a quantity we define as $\epsilon(\vec{n})$, i.e. if $A_r f(\vec{n}) > Mf(\vec{n}) - \epsilon(\vec{n})$ then $A_r f(\vec{n}) = Mf(\vec{n})$ and $r\in \mc{R}f(\vec{n})$. Define
\begin{equation*}
\epsilon = \frac{1}{3} \min \big\{\epsilon(\vec{n});\  \vec{n} \in \overline{B_R} \,\big\}.
\end{equation*}
Since $f_k \to f $ in  $l^1(\Z^d)$, we have $f_k \to f $ in  $l^{\infty}(\Z^d)$. Pick $k_0$ such that for $k \geq k_0$ we have $\|f_k - f\|_{l^{\infty}} \leq \epsilon$. For any $\vec{n} \in  \overline{B_R}$ if we take $s \in \mc{R}f(\vec{n})$ we have
\begin{equation}\label{sec3.1}
Mf(\vec{n}) = A_s f (\vec{n}) = A_s f_k (\vec{n}) + A_s(f-f_k)(\vec{n})   \leq Mf_k(\vec{n}) + \epsilon.
\end{equation}
Now given $r_k \in \mc{R}f_k(\vec{n})$ we can use \eqref{sec3.1} to obtain
\begin{align*}
\begin{split}
A_{r_k}f(\vec{n}) &= A_{r_k}f_k(\vec{n}) + A_{r_k}(f-f_k)(\vec{n})\\ 
& = Mf_k (\vec{n}) + A_{r_k}(f-f_k)(\vec{n}) \geq Mf_k (\vec{n}) - \epsilon \geq Mf(\vec{n}) -2\epsilon\,,
\end{split}
\end{align*}
and from the definition of $\epsilon$ and $\epsilon(\vec{n})$ we conclude that $r_k \in \mc{R}f(\vec{n})$.
\end{proof}

\subsubsection{Reduction via the Brezis--Lieb lemma}
Given $\epsilon >0$, we can find $k_0$ such that $\|f_k - f\|_{l^{\infty}} \leq \epsilon$, and using Lemma \ref{radii} for a fixed $\vec{n}\in \Z^d$, we can choose $k_1 \geq k_0$ so that we also have $\mc{R}f_k(\vec{n}) \subset \mc{R}f(\vec{n})$ for $k \geq k_1$. Taking any $r_k \in \mc{R}f_k(\vec{n})$ we have
\begin{equation}\label{equat_k}
\big|Mf(\vec{n}) - Mf_k(\vec{n})\big| = \big|A_{r_k}f(\vec{n}) - A_{r_k}f_k(\vec{n}) \big| \leq \epsilon\,,
\end{equation}
for $k \geq k_1$ and thus $Mf_k(\vec{n}) \to Mf(\vec{n})$ as $k\to \infty$. The same can be said replacing $\vec{n}$ by $\vec{n} + \vec{e}_d$ and thus we find that 
\begin{equation}\label{conv1}
\frac{\partial}{\partial x_d}Mf_k (\vec{n}) \to \frac{\partial}{\partial x_d}Mf (\vec{n})
\end{equation}
pointwise as $k \to \infty$. Since 
\begin{equation*}
\left| \Big|\frac{\partial}{\partial x_d}Mf_k(\vec{n}) \Big| - \Big|\frac{\partial}{\partial x_d}Mf_k(\vec{n} ) - \frac{\partial}{\partial x_d}Mf(\vec{n}) \Big|\right| \leq \left|\frac{\partial}{\partial x_d}Mf(\vec{n}) \right| 
\end{equation*}
and the latter is in $l^1(\Z^d)$ from the boundedness part of the theorem, an application of the dominated convergence theorem with \eqref{conv1} gives us
\begin{equation*}
\lim_{k\to \infty} \left\{\left\|\frac{\partial}{\partial x_d}Mf_k \right\|_{l^1(\Z^d)} - \left\|\frac{\partial}{\partial x_d}Mf_k - \frac{\partial}{\partial x_d}Mf \right\|_{l^1(\Z^d)} \right\} = \left\|\frac{\partial}{\partial x_d}Mf\right\|_{l^1(\Z^d)}. 
\end{equation*}
Therefore, to prove \eqref{limit} it suffices to show that
\begin{equation}\label{key}
\lim_{k\to \infty} \left\|\frac{\partial}{\partial x_d}Mf_k \right\|_{l^1(\Z^d)}  = \left\|\frac{\partial}{\partial x_d}Mf\right\|_{l^1(\Z^d)}. 
\end{equation}
The reduction to \eqref{key} is the content of the Brezis-Lieb lemma \cite{BL} in the case $p=1$. We henceforth focus our efforts in proving \eqref{key}.

\subsubsection{Lower bound} From Fatou's lemma and \eqref{conv1} we have
\begin{equation}\label{liminf}
 \left\|\frac{\partial}{\partial x_d}Mf\right\|_{l^1(\Z^d)} \leq \liminf_{k\to \infty} \left\|\frac{\partial}{\partial x_d}Mf_k \right\|_{l^1(\Z^d)} .
\end{equation}

\subsubsection{Upper bound} Given $\epsilon >0$ we shall prove that there exists $k_0 = k_0(\epsilon)$ such that for $k \geq k_0$ we have
\begin{equation}\label{limsup1}
\left\|\frac{\partial}{\partial x_d}Mf_k \right\|_{l^1(\Z^d)} \leq  \left\|\frac{\partial}{\partial x_d}Mf\right\|_{l^1(\Z^d)} + \ \epsilon.
\end{equation}
This would imply that 
\begin{equation*}
\limsup_{k\to \infty} \left\|\frac{\partial}{\partial x_d}Mf_k \right\|_{l^1(\Z^d)} \leq  \left\|\frac{\partial}{\partial x_d}Mf\right\|_{l^1(\Z^d)},
\end{equation*}
which together with \eqref{liminf} would prove that the limit exists and \eqref{key} holds.

\smallskip

Let us start with a sufficiently large integer radius $R$ (to be properly chosen later) and consider the cube $ \big\{ \vec{x} \in \R^d; |\vec{x}|_{\infty} \leq 2R\big\}$. Let us continue writing $\vec{n} \in \Z^d$ as $\vec{n} = \big(n', n_d\big)$ with $n' \in \Z^{d-1}$. We write the required sum in the following way
\begin{align}\label{final0}
\begin{split}
 \left\|\frac{\partial}{\partial x_d}Mf_k \right\|_{l^1(\Z^d)}  \!& \!=\! \sum_{\stackrel{|n'|_{\infty} \leq 2R}{|n_d|_{\infty} \leq 2R }} \left| \frac{\partial}{\partial x_d}Mf_k \big(n',n_d\big)\right| + \! \sum_{\stackrel{|n'|_{\infty} > 2R}{n_d \in \Z}} \left| \frac{\partial}{\partial x_d}Mf_k\big(n',n_d\big)\right|\\
 & \ \ \ \ \ \ \ \ \ \ \ \ \ \ \ \ \ \ \ \ \ \ + \sum_{\stackrel{|n'|_{\infty} \leq 2R}{|n_d|_{\infty} > 2R }} \left| \frac{\partial}{\partial x_d}Mf_k \big(n',n_d\big)\right|\\
 &:= S_1 + S_2 + S_3.
 \end{split}
\end{align}
We shall bound $S_1$, $S_2$ and $S_3$ separately.

\subsubsection{Bound for $S_1$} Let us pick $\epsilon_1>0$ (to be properly chosen later). With the aid of Lemma \ref{radii} we find $k_1 = k_1(\epsilon_1, R)$ such that $\mc{R}f_k(\vec{n}) \subset \mc{R}f(\vec{n})$ for each $\vec{n}$ with $|\vec{n}|_{\infty} \leq 2R+1$ and 
\begin{equation}\label{k1}
\|f_k - f\|_{l^{\infty}(\Z^d)} \leq \epsilon_1,
\end{equation}
for $k \geq k_1$. Using \eqref{equat_k} we have that
\begin{equation*}
\left| \frac{\partial}{\partial x_d}Mf_k (\vec{n}) - \frac{\partial}{\partial x_d}Mf (\vec{n}) \right| \leq 2\epsilon_1,
\end{equation*}
 for any $\vec{n}$ with $|\vec{n}|_{\infty} \leq 2R$. Thus
\begin{align}\label{final1}
\begin{split}
S_1 & =\!\! \!\!\sum_{\stackrel{|n'|_{\infty} \leq 2R}{|n_d|_{\infty} \leq 2R }} \left| \frac{\partial}{\partial x_d}Mf_k \big(n',n_d\big)\right| \!\leq \!\!\!\! \sum_{\stackrel{|n'|_{\infty} \leq 2R}{|n_d|_{\infty} \leq 2R }} \left| \frac{\partial}{\partial x_d}Mf \big(n',n_d\big)\right| \!+ 2\,\epsilon_1 (4R+1)^d\\
&  \ \ \ \ \ \ \ \ \ \ \ \ \ \ \ \ \ \ \ \ \ \ \ \ \ \leq  \left\|\frac{\partial}{\partial x_d}Mf\right\|_{l^1(\Z^d)} +  2\,\epsilon_1 (4R+1)^d.
\end{split}
\end{align}

\subsubsection{Bound for $S_2$}  Here we start with the same idea (and notation for the local maxima and local minima over vertical lines) as in \eqref{sum2}
\begin{align}
\begin{split}\label{sum2_cont}
S_2 & = \sum_{|n'|_{\infty} >2R}\  \sum_{l=-\infty}^{\infty}  \Big| \frac{\partial}{\partial x_d} Mf_k \big(n',l\big)\Big| \\
&  \leq \sum_{|n'|_{\infty} >2R}  2 \sum_{j=-\infty}^{\infty}  \left\{A_{r_j}f_k\big(n', a_j\big)  - A_{s_j}f_k\big(n',b_j\big)\right\}.
\end{split}
\end{align}
We find an upper bound for the contribution of a generic point $f_k\big(p', p_d\big)$ to the right-hand side of \eqref{sum2_cont} as previously done in \eqref{bigsum2}. This is given by 
\begin{align}\label{bigsum2_cont}
\begin{split}
& 2 f_k\big(p', p_d\big) \sum_{|n'|_{\infty} >2R}\  \sum_{j=-\infty}^{\infty} \left(  \frac{1}{C_{\Omega}\left(\lambda^{-1} \big(|p' - n'|^2 + (p_d - a_j)^2\big)^{1/2} - c_1\right)^d_+} \right.\\
&  \ \ \ \ \ \ \ \ \ \ \ \ \ \ \   -  \left. \min\left\{ \frac{1}{C_{\Omega}\left(c_2+a_j - a_{j-1}+ c_1\right)^d},\right.\right.\\
& \ \ \ \ \ \ \ \ \ \ \ \ \ \ \     \left. \left. \frac{1}{C_{\Omega}\left(\lambda^{-1}\big(|p' - n'|^2 + (p_d - a_j)^2\big)^{1/2} +a_j - a_{j-1}+ c_1\right)^d}\right\} \right).
\end{split}
\end{align}
Using Lemma \ref{summability} we see that the sum on the right-hand side of \eqref{bigsum2_cont} is majorized by the sum with the sequence $a_j = j$. This gives us
\begin{align}\label{bigsum3_cont}
\begin{split}
& 2 f_k\big(p', p_d\big) \sum_{|n'|_{\infty} >2R} \ \sum_{j=-\infty}^{\infty} \left(  \frac{1}{C_{\Omega}\left(\lambda^{-1} \big(|p' - n'|^2 + j^2\big)^{1/2} - c_1\right)^d_+} \right.\\
&  \ \ \ \ \ \ \ \ \ \ \ \ \ \ \   -  \left. \min\left\{ \frac{1}{C_{\Omega}\left(c_2+1+ c_1\right)^d},\right.\right.\\
& \ \ \ \ \ \ \ \ \ \ \ \ \ \ \     \left. \left. \frac{1}{C_{\Omega}\left(\lambda^{-1}\big(|p' - n'|^2 + j^2\big)^{1/2} +1 + c_1\right)^d}\right\} \right).
\end{split}
\end{align}
We now evaluate this contribution in two distinct sets. Firstly, we consider the case when $\big(p', p_d\big) \in \overline{B_{R}}$, for which we have $|p' - n'| \geq R$. Imposing the condition that
\begin{equation}\label{R-cond}
\lambda^{-1} R > c_2
\end{equation}
we can ensure that the contribution of $f_k\big(p', p_d\big)$ is majorized by
\begin{align}\label{bigsum4_cont}
\begin{split}
& 2 f_k\big(p', p_d\big) \sum_{|\vec{n}| \geq R} \left(  \frac{1}{C_{\Omega}\left(\lambda^{-1} |\vec{n}| - c_1\right)^d} - \frac{1}{C_{\Omega}\left(\lambda^{-1}|\vec{n}| +1 + c_1\right)^d} \right)\\
& \  \ \ \ \ := 2 \,f_k\big(p', p_d\big)\,  h(R).
\end{split}
\end{align}
The fact that $h(R) \to 0$ as $R \to \infty$ is a crucial point in this proof and shall be used when we choose $R$ at the end. Secondly, when $\big(p', p_d\big) \notin \overline{B_{R}}$ the contribution will simply be bounded by 2 $\widetilde{C} f_k\big(p', p_d\big)$ as we found in \eqref{Ctilde}. If we then sum up these contributions and plug them in on the right-hand side of \eqref{sum2_cont} we find
\begin{equation}\label{final2}
S_2 \leq 2 \,h(R)\, \|\chi_{\overline{B_R}} f_k\|_{l^1(\Z^d)} + 2\, \widetilde{C} \,\|\chi_{\overline{B_R}^c} f_k\|_{l^1(\Z^d)}.
\end{equation}

\subsubsection{Bound for $S_3$} We start by noting that 
\begin{align*}
\begin{split}\label{sum2_cont_1}
S_3 & = \sum_{|n'|_{\infty} \leq 2R}\  \sum_{l=2R+1}^{\infty}  \left| \frac{\partial}{\partial x_d} Mf_k \big(n',l\big) \right|+  \sum_{|n'|_{\infty} \leq 2R} \sum_{l=-\infty}^{-2R-1}  \left| \frac{\partial}{\partial x_d} Mf_k \big(n',l\big) \right|\\
& := S_{3}^+ + S_{3}^-.
\end{split}
\end{align*}
Let us provide an upper bound for $S_3^+$. The upper bound for  $S_3^-$ is analogous. We consider the sequence of local maxima $\{a_j\}$ and local minima $\{b_j\}$ for $Mf_k \big(n',l\big)$ when $l \geq 2R+1$. In this situation we do have a first local maximum $a_1$ (which might be the endpoint $2R+1$) and we order this sequence as follows:
\begin{equation*}
2R+1\leq a_1 < b_2 < a_2< b_3< a_3...
\end{equation*}
If the sequence terminates, it will be in a local maximum since $Mf_k \in l^1_{weak}(\Z^d)$, and we can just truncate the sum in the argument below. Keeping the notation as before (and including for convenience $a_0 = b_1 = -\infty$) we have
\begin{equation}\label{sum_2_cont_S3}
S_3^+   \leq \sum_{|n'|_{\infty} \leq 2R}  2 \sum_{j=1}^{\infty}  \left\{A_{r_j}f_k\big(n', a_j\big)  - A_{s_j}f_k\big(n',b_j\big)\right\}.
\end{equation}
The contribution of a generic point $f_k\big(p', p_d\big)$ to the right-hand side of \eqref{sum_2_cont_S3} (following the calculation \eqref{bigsum2}), has an upper bound of
\begin{align}\label{bigsum2_cont_S3}
\begin{split}
& 2 f_k\big(p', p_d\big) \sum_{|n'|_{\infty} \leq 2R} \  \sum_{j=1}^{\infty} \left(  \frac{1}{C_{\Omega}\left(\lambda^{-1} \big(|p' - n'|^2 + (p_d - a_j)^2\big)^{1/2} - c_1\right)^d_+} \right.\\
&  \ \ \ \ \ \ \ \ \ \ \ \ \ \ \   -  \left. \min\left\{ \frac{1}{C_{\Omega}\left(c_2+a_j - a_{j-1}+ c_1\right)^d},\right.\right.\\
& \ \ \ \ \ \ \ \ \ \ \ \ \ \ \     \left. \left. \frac{1}{C_{\Omega}\left(\lambda^{-1}\big(|p' - n'|^2 + (p_d - a_j)^2\big)^{1/2} +a_j - a_{j-1}+ c_1\right)^d}\right\} \right).
\end{split}
\end{align}
Following the ideas of Lemma \ref{summability}, keeping the constraint that $a_0 = -\infty$, the sum on the right-hand side of \eqref{bigsum2_cont_S3} is maximized when $a_j = 2R+j$ for $j\geq1$. We would then have the upper bound
\begin{align}\label{bigsum3_cont_S3}
\begin{split}
&  2 f_k\big(p', p_d\big) \sum_{|n'|_{\infty} \leq 2R}  \frac{1}{C_{\Omega}\left(\lambda^{-1} \big(|p' - n'|^2 + (p_d - 2R-1)^2\big)^{1/2} - c_1\right)^d_+} \\
&+  2 f_k\big(p', p_d\big) \sum_{|n'|_{\infty} \leq 2R} \sum_{j=2}^{\infty} \left(  \frac{1}{C_{\Omega}\left(\lambda^{-1} \big(|p' - n'|^2 + (p_d - 2R-j)^2\big)^{1/2} - c_1\right)^d_+} \right.\\
&  \ \ \ \ \ \ \ \ \ \ \ \ \ \ \   -  \left. \min\left\{ \frac{1}{C_{\Omega}\left(c_2+1+ c_1\right)^d},\right.\right.\\
& \ \ \ \ \ \ \ \ \ \ \ \ \ \ \     \left. \left. \frac{1}{C_{\Omega}\left(\lambda^{-1}\big(|p' - n'|^2 + (p_d - 2R -j)^2\big)^{1/2} +1+ c_1\right)^d}\right\} \right).
\end{split}
\end{align}
Again, we evaluate this contribution separately for $\big(p', p_d\big)$ in the sets $\overline{B_R}$ and $\overline{B_R}^c$. In the first case, if $\big(p', p_d\big) \in \overline{B_R}$ we have $|p_d - 2R -1| \geq R$, and if we choose $R$ satisfying \eqref{R-cond} the contribution of $f_k\big(p', p_d\big)$ will be less than or equal to 
\begin{align}\label{bigsum4_cont_S3}
\begin{split}
&2 f_k\big(p', p_d\big) \left\{\frac{(4R+1)^{d-1}}{C_{\Omega}\left(\lambda^{-1} R-c_1\right)^d} \right.\\
&  \ \ \ \  \ \ \ \ \ \ \ \ \ \ \ \ \ \ \ \ \ \left. +\sum_{|\vec{n}|\geq R}  \left( \frac{1}{C_{\Omega}\left(\lambda^{-1} |\vec{n}|- c_1\right)^d} - \frac{1}{C_{\Omega}\left(\lambda^{-1}|\vec{n}| +1+ c_1\right)^d} \right)\right\}\\
& = 2 f_k\big(p', p_d\big) \left\{\frac{(4R+1)^{d-1}}{C_{\Omega}\left(\lambda^{-1} R-c_1\right)^d} + h(R)\right\}.
\end{split}
\end{align}
In the second case, if $\big(p', p_d\big) \in \overline{B_R}^c$, we just bound the contribution of $f_k\big(p', p_d\big)$ by $2\,\widetilde{C}\, f_k\big(p', p_d\big)$ as in \eqref{Ctilde}. Plugging these upper bounds in \eqref{sum_2_cont_S3} we find
\begin{equation}\label{final3}
S_3^+ \leq  2\,\left\{\frac{(4R+1)^{d-1}}{C_{\Omega}\left(\lambda^{-1} R-c_1\right)^d} + h(R)\right\} \|\chi_{\overline{B_R}} f_k\|_{l^1(\Z^d)} + 2 \,\widetilde{C}\, \|\chi_{\overline{B_R}^c} f_k\|_{l^1(\Z^d)}.
\end{equation}
By symmetry the same bound holds for $S_3^-$.

\subsubsection{Conclusion} Putting together \eqref{final0}, \eqref{final1}, \eqref{final2} and \eqref{final3} we obtain
\begin{align}\label{fim0}
\begin{split}
 & \left\|\frac{\partial}{\partial x_d}Mf_k \right\|_{l^1(\Z^d)} \leq \left\|\frac{\partial}{\partial x_d}Mf\right\|_{l^1(\Z^d)} +  2\,\epsilon_1 (4R+1)^d \\
 &  \ \ \ \ + \left\{4\frac{(4R+1)^{d-1}}{C_{\Omega}\left(\lambda^{-1} R-c_1\right)^d} + 6\, h(R)\right\} \|\chi_{\overline{B_R}} f_k\|_{l^1(\Z^d)} +  6\,\widetilde{C} \|\chi_{\overline{B_R}^c} f_k\|_{l^1(\Z^d)}.
 \end{split}
\end{align}
We choose (in this order) $R$ large enough so that it satisfies \eqref{R-cond}, 
\begin{equation}\label{fim1}
\left\{4\frac{(4R+1)^{d-1}}{C_{\Omega}\left(\lambda^{-1} R-c_1\right)^d} + 6\, h(R)\right\} \leq \frac{\epsilon}{3\big(\|f\|_{l^1(\Z^d)} + 1\big)},
\end{equation}
and 
\begin{equation*}
\|\chi_{\overline{B_R}^c} f\|_{l^1(\Z^d)} \leq \frac{\epsilon}{36\widetilde{C}}.
\end{equation*}
Then we choose $\epsilon_1$ such that 
\begin{equation}\label{fim2}
\epsilon_1 \leq \frac{\epsilon}{6 (4R+1)^d}\,,
\end{equation}
and this generates a $k_1$ as described in \eqref{k1}. We now choose $k_0 \geq k_1$ such that for all $k \geq k_0$ we have 
\begin{equation*}
\|f_k - f\|_{l^1(\Z^d)} \leq \min\left \{\frac{\epsilon}{36\widetilde{C}}\,, \,1\right\},
\end{equation*}
which then implies that 
\begin{equation}\label{fim3}
\|\chi_{\overline{B_R}} f_k\|_{l^1(\Z^d)} \leq \|f_k\|_{l^1(\Z^d)} \leq\big( \|f\|_{l^1(\Z^d)}  +1\big)
\end{equation}
and
\begin{align}\label{fim4}
\begin{split}
\|\chi_{\overline{B_R}^c} f_k\|_{l^1(\Z^d)}  \leq \|\chi_{\overline{B_R}^c} f\|_{l^1(\Z^d)} + \|\chi_{\overline{B_R}^c} (f_k-f)\|_{l^1(\Z^d)} \leq \frac{\epsilon}{18\widetilde{C}}.
\end{split}
\end{align}
Plugging \eqref{fim1}, \eqref{fim2}, \eqref{fim3} and \eqref{fim4} into \eqref{fim0} gives us
\begin{equation*}
\left\|\frac{\partial}{\partial x_d}Mf_k \right\|_{l^1(\Z^d)} \leq \left\|\frac{\partial}{\partial x_d}Mf\right\|_{l^1(\Z^d)} +  \frac{\epsilon}{3} + \frac{\epsilon}{3} + \frac{\epsilon}{3}\,,
\end{equation*}
for all $k \geq k_0$, and the proof is now complete.

\subsection{Non-centered case} We will indicate here the basic changes that have to be made in comparison with the centered case argument.

\smallskip 

For a function $g \in l^1(\Z^d)$ and a point $\vec{n} \in \Z^d$, let us define the set $\widetilde{\mc{R}}g (\vec{n})$ as the set of all pairs $(\vec{x}, r) \in \R^d\times \R^+$ such that $\vec{n} \in \overline{\Omega}_r(\vec{x})$ and the supremum in the non-centered maximal function at $\vec{n}$ is attained for $\overline{\Omega}_r(\vec{x})$, i.e.
\begin{equation*}
\widetilde{\mc{R}}g(\vec{n}) = \left\{ (\vec{x}, r) \in \R^d\times \R^+;\ \widetilde{M}g(\vec{n}) = A_{(\vec{x},r)} |g|(\vec{n}) =  \frac{1}{N(\vec{x},r)} \sum_{\vec{m} \in \overline{\Omega}_{r}(\vec{x})}\big|g\big(\vec{m}\big)\big|\right\}.
\end{equation*}
The proof of the following result is essentially the same as in Lemma \ref{radii}.
\begin{lemma}\label{radii2}
Let $f_k \to f $ in  $l^1(\Z^d)$. Given $R>0$ there exists $k_0= k_0(R)$ such that, for $k\geq k_0$, we have $\widetilde{\mc{R}}f_k(\vec{n}) \subset \widetilde{\mc{R}}f(\vec{n})$ for each $\vec{n} \in  \overline{B_R}$.
\end{lemma}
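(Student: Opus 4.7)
The plan is to mirror the proof of Lemma \ref{radii}, replacing the one-parameter family of radii $r$ by the two-parameter family of pairs $(\vec{x}, r) \in \R^d \times \R^+$ that also satisfy the containment $\vec{n} \in \overline{\Omega}_r(\vec{x})$. The essential ingredients are the same: a positive gap between the maximum value of the average and the ``second largest'' attainable value, plus the fact that $l^1$-convergence of $f_k$ to $f$ implies $l^\infty$-convergence.

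First I would verify that for each $\vec{n} \in \overline{B_R}$ there are only finitely many distinct values of $A_{(\vec{x},r)} f(\vec{n})$ exceeding $\hh \widetilde{M}f(\vec{n})$. As noted in the discussion following \eqref{average_r_j_x_j}, since $f \in l^1(\Z^d)$ any pair $(\vec{x}, r)$ producing an average of this size must satisfy $|\vec{x}| \leq R'$ and $r \leq R'$ for some $R' = R'(f, \vec{n})$, otherwise the average would tend to zero by \eqref{asymp2}. In this bounded region of parameter space there are only finitely many distinct subsets $\overline{\Omega}_r(\vec{x}) \cap \Z^d$ of $\Z^d$, and two pairs yielding the same lattice subset yield the same average. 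Consequently the supremum is attained, and one can define a gap $\epsilon(\vec{n}) > 0$ with the property that any pair satisfying $A_{(\vec{x},r)} f(\vec{n}) > \widetilde{M}f(\vec{n}) - \epsilon(\vec{n})$ actually lies in $\widetilde{\mc{R}}f(\vec{n})$.

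Next I would set
\begin{equation*}
\epsilon = \tfrac{1}{3} \min\bigl\{\epsilon(\vec{n}) : \vec{n} \in \overline{B_R}\bigr\} > 0,
\end{equation*}
which is a minimum over a finite set of lattice points, and choose $k_0$ large enough that $\|f_k - f\|_{l^{\infty}(\Z^d)} \leq \epsilon$ for $k \geq k_0$. Fix $\vec{n} \in \overline{B_R}$, and pick any $(\vec{y}, s) \in \widetilde{\mc{R}}f(\vec{n})$ and any $(\vec{x}_k, r_k) \in \widetilde{\mc{R}}f_k(\vec{n})$. Comparing the two averages exactly as in \eqref{sec3.1}, one has
\begin{equation*}
\widetilde{M}f(\vec{n}) = A_{(\vec{y},s)}f(\vec{n}) \leq A_{(\vec{y},s)}f_k(\vec{n}) + \epsilon \leq \widetilde{M}f_k(\vec{n}) + \epsilon,
\end{equation*}
and therefore
\begin{equation*}
A_{(\vec{x}_k, r_k)} f(\vec{n}) \geq A_{(\vec{x}_k, r_k)} f_k(\vec{n}) - \epsilon = \widetilde{M}f_k(\vec{n}) - \epsilon \geq \widetilde{M}f(\vec{n}) - 2\epsilon.
\end{equation*}
By the defining property of $\epsilon(\vec{n})$ and the choice of $\epsilon$, this forces $(\vec{x}_k, r_k) \in \widetilde{\mc{R}}f(\vec{n})$, which is the desired inclusion.

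The only genuine difference from Lemma \ref{radii} is the first step, and even there the obstacle is only apparent: the parameter space $\R^d \times \R^+$ is larger than $[0, \infty)$, but averages are constant on the equivalence classes determined by the (finitely many, once bounded) lattice subsets $\overline{\Omega}_r(\vec{x}) \cap \Z^d$, so the ``finitely many values above threshold'' reasoning carries over verbatim. Once that is in hand, the gap argument and the $l^\infty$-comparison transfer without modification from the centered case.
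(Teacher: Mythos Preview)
Your proposal is correct and follows exactly the approach the paper indicates: the paper's own ``proof'' of Lemma~\ref{radii2} is simply the sentence ``The proof of the following result is essentially the same as in Lemma~\ref{radii},'' and you have faithfully carried out that adaptation, correctly noting that the only new point is the finiteness of distinct lattice subsets $\overline{\Omega}_r(\vec{x}) \cap \Z^d$ once the parameters are confined to a bounded region (a fact the paper already establishes in the paragraph following \eqref{average_r_j_x_j}).
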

The rest of the proof is also similar, using  \eqref{average_s_j_y_j}, \eqref{sum2nc} and \eqref{nccond} in the appropriate places.

\section{Acknowledgements}
The first author acknowledges support from CNPq-Brazil grants $473152/2011-8$  and $302809/2011-2$. The second author acknowldges support from NSF grant DMS$-0901040$. We would like to thank Carlos Cabrelli and Ursula Molter for discussions related to Remark 2.


\begin{thebibliography}{99}
 
 \bibitem{AP} 
J. M. Aldaz and J. P\'{e}rez L\'{a}zaro, 
\newblock Functions of bounded variation, the derivative of the one dimensional maximal function, and applications to inequalities,
 \newblock Trans. Amer. Math. Soc. 359 (2007), no. 5, 2443--2461.
 
\bibitem{BCHP}
J. Bober, E. Carneiro, K. Hughes and L. B. Pierce,
\newblock On a discrete version of Tanaka's theorem for maximal functions, 
\newblock Proc. Amer. Math. Soc. 140 (2012), 1669--1680.

\bibitem{B}
J. Bourgain, 
\newblock Averages in the plane over convex curves and maximal operators,
\newblock J. Anal. Math. 47 (1986), 69--85.
 
\bibitem{BL}
H. Brezis and E. Lieb,
\newblock A relation between pointwise convergence of functions and convergence of functionals, 
\newblock Proc. Amer. Math. Soc. 88 (1983), 486--490.
 
\bibitem{CM}
E. Carneiro and D. Moreira, 
\newblock On the regularity of maximal operators,
\newblock Proc. Amer. Math. Soc. 136 (2008), no. 12, 4395--4404.

\bibitem{CS}
E. Carneiro and B. F. Svaiter, 
\newblock On the variation of maximal operators of convolution type,
\newblock J. Funct. Anal. 265 (2013), 837--865.
 
\bibitem{HO} 
P. Haj\l asz and J. Onninen, 
\newblock On boundedness of maximal functions in Sobolev spaces,
\newblock Ann. Acad. Sci. Fenn. Math. 29 (2004),  no. 1, 167--176.
 
\bibitem{Ki}
J. Kinnunen, 
\newblock The Hardy-Littlewood maximal function of a Sobolev function,
\newblock Israel J. Math. 100 (1997), 117--124.
 
\bibitem{KL}
J. Kinnunen and P. Lindqvist, 
\newblock The derivative of the maximal function,
\newblock J. Reine Angew. Math. 503 (1998), 161--167.

\bibitem{KiSa}
J. Kinnunen and E. Saksman,
\newblock Regularity of the fractional maximal function,
\newblock Bull. London Math. Soc. 35 (2003), no. 4, 529--535. 

\bibitem{Ku}
O. Kurka,
\newblock On the variation of the Hardy-Littlewood maximal function,
\newblock preprint at http://arxiv.org/abs/1210.0496.
 
\bibitem{Lang}
S. Lang,
\newblock {\it Algebraic number theory},
\newblock Addison-Wesley Publishing Co., Inc. (1970).

\bibitem{Lu1}
H. Luiro, 
\newblock Continuity of the maximal operator in Sobolev spaces,
\newblock Proc. Amer. Math. Soc. 135 (2007), no. 1, 243--251.
 
\bibitem{Lu2}
H. Luiro,
\newblock On the regularity of the Hardy-Littlewood maximal operator on subdomains of $\R^n$,
\newblock Proc. Edinburgh Math. Soc. 53 (2010), no 1, 211--237.
 
\bibitem{MSW}
A. Magyar, E. M. Stein and S. Wainger,
\newblock Discrete analogues in harmonic analysis: Spherical averages,
\newblock Ann. Math. 155 (2002), 189--208.

\bibitem{S}
E. M. Stein, 
\newblock Maximal functions. I. Spherical means,
\newblock Proc. Nat. Acad. Sci. U.S.A. 73 (1976), 2174--2175.
 
\bibitem{SteinHA}
E. M. Stein,
\newblock {\it Harmonic analysis: real-variable methods, orthogonality, and oscillatory integrals},
\newblock Princeton Mathematical Series, Vol 43 (1993).
 
\bibitem{Ta}
H. Tanaka, 
\newblock A remark on the derivative of the one-dimensional Hardy-Littlewood maximal function,
\newblock Bull. Austral. Math. Soc. 65 (2002), no. 2, 253--258.

\bibitem{Te}
F. Temur,
\newblock On regularity of the discrete Hardy-Littlewood maximal function,
\newblock preprint at http://arxiv.org/abs/1303.3993.
 
\end{thebibliography}

\bibliographystyle{amsplain}

\end{document}